\newtheorem{defn}{Definition}[section]
\newtheorem{lem}{Lemma}[section]
\newtheorem{thm}[lem]{Theorem}
\newcommand{\be}{\begin{equation}}
	\newcommand{\ee}{\end{equation}}
\begin{document}
	\setlength{\unitlength}{1mm}
	\baselineskip 8mm
	%=============================================

\thispagestyle{empty}
\title{
On Clique Incidence Matrices and Derivatives of Clique Polynomials}
\author
{Hossein Teimoori Faal
	%\thanks{}
	\vspace{.25in}\\
	Department of Mathematics and Computer Science,\\ 
	Allameh Tabataba'i University, Tehran, Iran
}

	\maketitle
	
\begin{abstract}

The ordinary generating function of the number of complete subgraphs (cliques) of $G$, denoted by $C(G,x)$, is called the 
The clique polynomial of the graph 
$G$.
In this paper, we first introduce some \emph{clique} incidence matrices associated by a simple graph $G$ as a generalization of the
classical 
vertex-edge incidence matrix of $G$. Then, using 
these clique incidence matrices, we obtain two clique-counting 
identities that can be used for deriving two combinatorial formulas for the first and the second derivatives of clique polynomials. Finally, we conclude 
the paper with several open questions and conjectures
about possible extensions of our main results for higher derivatives of 
clique polynomials. 	
	
\end{abstract}

\section{Introduction}

The 
\emph{reconstruction} problems in discrete setting can be considered as a theoretical foundation for many 
inverse problems in the area of computer science and 
information technology. In graph-theoretical setting, 
the well-know Kelly-Ulam \cite{Harary74}
\emph{reconstruction conjecture} simply states that any graph with at least three vertices can be reconstructed 
from its \emph{vertex-deck}. We recall that a vertex deck 
of a given graph $G$ is the collection of (not necessarily distinct) \emph{vertex-deleted} subgraphs; that is  
$\{G-v\}_{v \in V(G)}$. We will denote this collection by $deck_{v}(G)$. It seems that finding the general solution for this longstanding open problem is 
a quite challenging problem. 
\\
In another direction, one can try to reconstruct 
the key \emph{invariants} of a given graph rather than 
reconstructing the graph itself. 
It seems that the collection of complete subgraphs (that
we call them cliques), plays an essential role in 
reconstructing key invariants of graphs. 
In this respect, 
one of our main goal in this paper is to obtain some 
interesting \emph{clique-counting} identities. 
A common approach to tackle these kind of problems is 
to use the two ideas of \emph{incidence matrices}  
and the \emph{double-counting} method. 
In the classical literature of graph theory, the classical 
vertex-edge incidence matrix has been already introduced. A generalization of this idea has been also 
introduced in the context of \emph{graph homology} \cite{Giblin1977}
under the name of \emph{boundary maps} of \emph{simplicial complexes} as higher-dimensional generalizations 
of graphs. Indeed, one can construct edge-triangle incidence matrix and their higher-order cliques generalizations simply by defining a $(0,1)$-matrix 
where non-zero entries shows that a lower-dimensional clique 
(here we call it the \emph{subclique}) contained in 
is a subgraph of a higher-dimensional one (here we call it the \emph{superclique}).      
\\
In this paper, based on the motivation originating form  the reconstruction problem, we construct other  kinds of 
incidence matrices which we call them the \emph{clique incidence} matirces.   	 
Using this idea, we obtain two important clique-counting identities that relates 
the number of $k$-cliques ($k=1,2$) in the graph $G$
itself to the number of $k$-cliques in 
the \emph{clique-deck} of $G$. 
Then, we establish two combinatorial formulas for the first and the second derivatives of clique polynomials. 
A formula very similar to that of our first derivative has been already mentioned in the literature \cite{Li-Gutman}, but our formula for the second derivative of 
clique polynomials, to the best of our knowledge, is 
a new combinatorial formula. Indeed, there is no formula in the literature for the second or higher derivatives of many graph polynomials.   
For this reason, we also include a discussion about 
possible future directions in this fascinating area of clique-counting graph polynomials.

\section{
	The Generalized Incidence Matrices}

All graphs here are finite, simple and undirected. 
For the terminologies not defined here, one can consult the reference \cite{gtwa}\\
For a given graph $G=(V,E)$ and a vertex $v\in V(G)$,
it's \emph{vertex-deleted} subgraph denoted by 
$G-v$ is defined as an induced subgraph obtained from 
$G$ by deleting the vertex $v$. The collection of all 
vertex-deleted subgraphs of $G$ is called a 
\emph{vertex-deck} of 
$G$ and denoted by $deck_{v}(G)$. 
One can similarly define the \emph{edge-deleted} subgraph 
of $G$ which we denote it by $G-e$. The collection of all
edge-deleted subgraphs of $G$ is called a 
\emph{edge-deck} 
of $G$ and is denoted by $deck_{e}(G)$.  
The \emph{open neighborhood} 
of the vertex $v$ in $G$ is the set of vertices 
\emph{adjacent} to $v$ and is denoted by $N_{G}(v)$. 
A $k$-\emph{clique} of a graph $G$ 
is denoted by $Q_{k}$ is defined as a \emph{complete} subgraph $G$ on $k$ vertices. The set $\Delta_{k}(G)$
denotes the set of all $k$-cliques of $G$. 
We will also denote the number of $k$-cliques 
of $G$ by $c_{k}(G)$. 
In general, for any clique $Q_{k} \in \Delta_{k}(G)$,
we define the \emph{clique-deck} of $G$ as the collection
of all \emph{clique-deleted} subgraphs of $G$. 
\\
A generalization of the concept of the \emph{degree} of 
a vertex $v \in V(G)$ is called the \emph{clique-value} 
which can be defined, as follows.

\begin{defn}

Let $G=(V,E)$ be a graph and let $Q_{k}$ be a 
$k$-clique of $G$. Then, we define the \emph{clique-value} 
denoted by $val_{G}(Q_{k})$, as follows

\begin{equation}
val_{G}(Q_{k}) = 
\bigcap_{v \in V(Q_{k})} N_{G}(v). 
\end{equation}

\end{defn}

As a generalization of standard vertex-edge incidence 
matrix, we define our 
first clique incidence matrix that 
we call it 
the \emph{subclique-superclique} incidence matrix 
$I_{c,k}(G)$, as follows. 
From now on, we will assume 
$
\Delta_{k}(G)= 
\{
Q_{k,1}, Q_{k,2}, \ldots, Q_{k,r}
\}
$
in which $r=c_{k}(G)$.

\begin{defn}
	
For a given graph 
$G=(V,E)$, 
we define the \emph{subclique-superclique} incidence matrix of order $k$ denoted by  
$I_{c,k}(G)$, as follows 

\begin{equation}
\big( I_{c,k}(G)
\big)_{Q_{k,i},Q_{k+1,j}} =
\left\{ 
\begin{array}{ll}
1 & \textnormal{if $
	Q_{k,i}
	$~is~a~subgraph~of~
	$Q_{k+1,j}$}, \\
0 & \textnormal{otherwise}
\end{array} 
\right. 
\end{equation}

\end{defn}

It is clear that the subclique-superclique matrix of 
order $1$ is exactly the classic vertex-edge incidence 
matrix $I(G)$ of a graph $G$. 

\begin{lem}
	
	Let $G=(V,E)$ be a graph. Then, we have 
	
	\begin{equation}
	\sum_{Q_{k} \in \Delta_{k}(G)} val_{G}(Q_{k})=
	(k+1)c_{k+1}(G). 
	\end{equation}
	
\end{lem}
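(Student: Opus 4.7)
The plan is to prove the identity by double counting the entries of the subclique--superclique incidence matrix $I_{c,k}(G)$ just introduced. Throughout, I will read $val_G(Q_k)$ as the cardinality $|\bigcap_{v\in V(Q_k)}N_G(v)|$, since only this interpretation makes the right--hand side of the displayed equation numerical.

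First I would establish the crucial bijection: for every $k$-clique $Q_k$ of $G$, the map $u\mapsto G[V(Q_k)\cup\{u\}]$ is a bijection between the common neighborhood $\bigcap_{v\in V(Q_k)}N_G(v)$ and the set of $(k+1)$-cliques of $G$ containing $Q_k$ as a subgraph. One direction uses that $u$ adjacent to every vertex of the clique $Q_k$ makes $V(Q_k)\cup\{u\}$ pairwise adjacent; the reverse direction uses that the unique vertex of $Q_{k+1}\setminus V(Q_k)$ must be adjacent to all of $V(Q_k)$. Consequently the row sum of $I_{c,k}(G)$ indexed by $Q_{k,i}$ equals $val_G(Q_{k,i})$.

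Next I would compute the column sums of $I_{c,k}(G)$. Each $(k+1)$-clique $Q_{k+1,j}$ has exactly $\binom{k+1}{k}=k+1$ subgraphs isomorphic to $K_k$ (one for each vertex deleted), and each such subgraph is itself a $k$-clique of $G$ appearing as some $Q_{k,i}$. Hence every column sum of $I_{c,k}(G)$ is exactly $k+1$, and summing over all $c_{k+1}(G)$ columns yields $(k+1)c_{k+1}(G)$.

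Finally, applying the double-counting principle $\sum_i (\text{row sum})_i=\sum_j(\text{column sum})_j$ to $I_{c,k}(G)$ gives the desired identity
\[
\sum_{Q_k\in\Delta_k(G)}val_G(Q_k)=(k+1)\,c_{k+1}(G).
\]
The only genuine content here is the bijection in the first step; everything else is bookkeeping. The main (and mild) obstacle is being careful that the ``intersection of neighborhoods'' really coincides with the count of extensions to a $(k+1)$-clique, which rests on $Q_k$ already being a clique so that adding any common neighbor automatically produces a complete subgraph on $k+1$ vertices.
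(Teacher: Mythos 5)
Your proposal is correct and follows essentially the same route as the paper: a double count of the entries of the subclique--superclique incidence matrix $I_{c,k}(G)$, with row sums giving $val_G(Q_k)$ and column sums giving $k+1$. You merely supply more detail than the paper does, in particular the explicit bijection between common neighbors and extending $(k+1)$-cliques, and the (correct) reading of $val_G(Q_k)$ as the cardinality of the neighborhood intersection.
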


\begin{proof}
	
We first note that the number of $k$-cliques of 
any $(k+1)$-clique $Q_{k+1,j}$ is equal to $k+1$. We also
note that the number of times that any given 
$k$-clique $Q_{k}$ appears as a subset of another 
$(k+1)$-clique is equal to $val_{G}(Q_{k})$. 
Now, considering the definition of \emph{subclique-superclique} incidence matrix 
$I_{c,k}(G)$ and the double-counting technique, we get the desired result.

\end{proof}

We also need to recall the definition and basic 
properties of \emph{clique polynomials} \cite{HajiMehrabadi(1998)}. 
\\
The \emph{clique polynomial} of a given graph 
$G=(V,E)$ is defined as the ordinary generating 
function of the number $k$-cliques of $G$. More
precisely, we have 

\begin{equation}
C(G,x) = 1 + 
\sum_{k=1}^{\omega(G)} c_{k}(G) x^{k}
,
\end{equation}
where $\omega(G)$ is the size of the \emph{largest}
clique in $G$. 
\\
One can easily prove that the polynomial $C(G,x)$
satisfies the following \emph{vertex-recurrence} relation. 

\begin{equation} \label{vert-rec1}
C(G,x) = C(G-v,x) + x C(G[N_{G}(v)],x),
\hspace{0.4cm}(v \in V(G)). 
\end{equation}

Similarly, we can prove the following \emph{edge-recurrence} 
relation for clique polynomials. 

\begin{equation} \label{edge-rec2}
C(G,x) = C(G-e,x) + x^{2} C(G[N_{G}(e)],x),
\hspace{0.4cm}(e=\{u,v\} \in E(G)), 
\end{equation}
where by $N_{G}(e)$, we mean the subgraph of
$G$ induced by the common neighbors of the end vertices 
of $e$. In other words, we have 
$
N_{G}(e) = N_{G}(u) \cap N_{G}(v)
$
.

\section{Main Results} 

In this section, we first obtain two interesting 
\emph{clique-counting} identities related to the 
\emph{vertex-deck} and the \emph{edge-deck} of $G$. 
\\
We start with the following identity closely related 
to the \emph{vertex-deck} of $G$.

\begin{lem}\label{vertkey1}
	Let $G=(V,E)$ be a graph on $n$ vertices. Then, we have 
	
	\begin{equation}
	(n-k)c_{k}(G) =
	\sum_{v \in V(G)}c_{k}(G-v) \hspace{0.4cm}
	(k\geq 1).  
	\end{equation}
	
\end{lem}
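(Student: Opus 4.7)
The plan is to prove the identity by a straightforward double counting argument, in the same spirit as the proof of the previous lemma, but applied to vertex/clique \emph{non-incidence} rather than subclique/superclique incidence. Specifically, I would count the set
\[
\mathcal{S}_k = \{(v,Q_k) : v \in V(G),\ Q_k \in \Delta_k(G),\ v \notin V(Q_k)\}
\]
in two different ways.

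First, I would fix the $k$-clique and vary the vertex. For any given $Q_k \in \Delta_k(G)$, the number of vertices of $G$ that do \emph{not} belong to $V(Q_k)$ is precisely $n - k$, since $|V(Q_k)| = k$. Summing over all $k$-cliques therefore gives $|\mathcal{S}_k| = (n-k)\, c_k(G)$, which is the left-hand side of the claimed identity.

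Next, I would fix the vertex and vary the clique. For a fixed $v \in V(G)$, the pairs $(v, Q_k) \in \mathcal{S}_k$ are in bijection with the $k$-cliques of $G$ that avoid the vertex $v$. The key observation here is that, because $G-v$ is the \emph{induced} subgraph of $G$ on $V(G)\setminus\{v\}$, a set $S \subseteq V(G)\setminus\{v\}$ induces a complete subgraph in $G$ if and only if it induces a complete subgraph in $G-v$. Hence the $k$-cliques of $G$ not containing $v$ are in one-to-one correspondence with $\Delta_k(G-v)$, and their number is $c_k(G-v)$. Summing over $v$ yields $|\mathcal{S}_k| = \sum_{v \in V(G)} c_k(G-v)$. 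Equating the two counts closes the proof.

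There is no real obstacle here; the only small point that deserves an explicit mention is the induced-subgraph remark above, which guarantees that deleting $v$ does not destroy or create any edges among the remaining vertices, so $k$-cliques are preserved exactly. Everything else is a direct application of the double counting technique that was already used in the proof of the preceding lemma.
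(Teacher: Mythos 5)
Your proof is correct and is essentially the paper's own argument: the set $\mathcal{S}_k$ of non-incident pairs $(v,Q_k)$ you count is exactly the set of nonzero entries of the clique incidence matrix $\big(I_{k,r}(G)\big)_{Q_{k,i},\,G-v_j}$ used in the paper, and your row/column counts coincide with the paper's. The explicit remark about induced subgraphs preserving cliques is a welcome small addition, but the route is the same.
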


\begin{proof}
	Put $r=c_{k}(G)$.  	
	Let $\Delta_{k}(G)=
	\{Q_{k,1}, Q_{k,2}, \ldots, Q_{k,r}\}
	$
	be the set of all $k$-cliques of $G$. 
	Now, we consider our second clique incidence matrix 
	$I_{H,k}(G)$ which is defined, as follows

\begin{equation}
\big(I_{k,r}(G)
\big)_{Q_{k,i},H-v_{j}} = 
\left\{ 
\begin{array}{ll}
		1 & \textnormal{if 
			$
			Q_{k,i}
			$~is~a~subgraph~of~
			$H-v_{j}$
		}, \\
		0 & otherwise. 
\end{array} 
\right. 
\end{equation}

Now, it is obvious that the sum of rows is 
equal to $(n-k)c_{k}(G)$ and the sum of columns equals
to 
$
\sum_{v \in V(G)}c_{k}(G-v)
$
. 
Hence, the proof is complete based on  
the \emph{double-counting} method.

\end{proof}

In a similar way, one can prove the following \emph{clique-counting} identity in close connection with 
\emph{edge-deck} of a graph. 

\begin{lem}\label{edgekey2}
	Let $G=(V,E)$ be a graph on $n$ vertices
	and $m$ edges. Then, we have 
	
	\begin{equation}
	\Big(m-{k \choose 2}\Big)c_{k}(G) =
	\sum_{e \in E(G)}c_{k}(G-e), 
	\hspace{0.4cm}(k\geq 2). 
	\end{equation}
	
\end{lem}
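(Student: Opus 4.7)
The plan is to mimic the double-counting argument used for Lemma \ref{vertkey1}, but with an edge-indexed incidence matrix in place of the vertex-indexed one. Concretely, I would introduce a $(0,1)$-matrix $J_{k}(G)$ whose rows are indexed by the $k$-cliques $Q_{k,1},\dots,Q_{k,r}$ of $G$ (with $r=c_{k}(G)$) and whose columns are indexed by the edge-deleted subgraphs $G-e$ for $e\in E(G)$, where the entry is $1$ iff $Q_{k,i}$ is a subgraph of $G-e$. The key observation is the elementary fact that, since $G-e$ has the same vertex set as $G$, a $k$-clique $Q_{k,i}$ of $G$ survives in $G-e$ precisely when $e\notin E(Q_{k,i})$; equivalently, the $k$-cliques of $G-e$ are exactly the $k$-cliques of $G$ that do not use $e$.

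Given this, I would compute the two marginal sums of $J_{k}(G)$. For the row sum corresponding to a fixed $k$-clique $Q_{k,i}$, the number of edges $e\in E(G)$ with $e\notin E(Q_{k,i})$ is $m-\binom{k}{2}$, since every $k$-clique contains exactly $\binom{k}{2}$ edges; summing over all rows therefore yields $\big(m-\binom{k}{2}\big)c_{k}(G)$. For the column sum corresponding to a fixed edge $e$, the observation above says it counts exactly $c_{k}(G-e)$, so summing over all columns gives $\sum_{e\in E(G)}c_{k}(G-e)$.

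Equating the two expressions via double counting yields the stated identity, and the hypothesis $k\geq 2$ is exactly what is needed so that $\binom{k}{2}\geq 1$ and the combinatorial interpretation of ``edges of a $k$-clique'' is meaningful (for $k=1$ there are no edges in a $1$-clique and the identity would degenerate). I do not anticipate a genuine obstacle here; the only mildly delicate point is making sure one does not confuse $k$-cliques of $G-e$ with $k$-cliques of the induced subgraph on the common neighborhood $N_{G}(e)$ (as appears in the edge-recurrence \eqref{edge-rec2}), but this is resolved by noting that $G-e$ is obtained by deleting only the edge $e$, not its endpoints.
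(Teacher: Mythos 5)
Your proof is correct and follows exactly the route the paper intends: the paper omits an explicit proof of Lemma \ref{edgekey2}, stating only that it is proved ``in a similar way'' to Lemma \ref{vertkey1}, and your edge-indexed incidence matrix with row sums $m-\binom{k}{2}$ (each $k$-clique containing exactly $\binom{k}{2}$ edges) and column sums $c_{k}(G-e)$ is precisely that analogous double-counting argument.
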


The following \emph{graph-theoretical} interpretation of the \emph{first derivative} of clique polynomials is similar to that of 
\cite{Li-Gutman}. Here, we 
also include the proof which is based on Lemma \ref{vertkey1}.

\begin{thm}
	Let $G=(V,E)$ be a graph. 	
	Then, we have 
	
	\begin{equation}
	\frac{d}{dx}C(G,x) = \sum_{v \in V(G)}C(G[N(v)], x ).  
	\end{equation}
	
\end{thm}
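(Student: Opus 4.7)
The plan is to start from the vertex-recurrence (\ref{vert-rec1}) and exploit Lemma \ref{vertkey1} in order to eliminate the unknown quantity $\sum_v C(G-v,x)$. Rearranging (\ref{vert-rec1}) gives $x\,C(G[N_G(v)],x) = C(G,x) - C(G-v,x)$ for every $v\in V(G)$. Summing this identity over all $n$ vertices of $G$ yields
\begin{equation*}
x \sum_{v \in V(G)} C(G[N_G(v)],x) \;=\; n\,C(G,x) \;-\; \sum_{v \in V(G)} C(G-v,x).
\end{equation*}

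Next I would expand the right-hand side by powers of $x$. Writing $C(G-v,x)=1+\sum_{k\geq 1}c_{k}(G-v)x^{k}$ and applying Lemma \ref{vertkey1} to each coefficient, one obtains
\begin{equation*}
\sum_{v\in V(G)} C(G-v,x) \;=\; n \;+\; \sum_{k\geq 1}(n-k)\,c_{k}(G)\,x^{k}.
\end{equation*}
Subtracting this from $n\,C(G,x)=n+\sum_{k\geq 1}n\,c_{k}(G)x^{k}$ collapses the $n$'s and produces $\sum_{k\geq 1} k\,c_{k}(G)\,x^{k}$, which is exactly $x\,\frac{d}{dx}C(G,x)$. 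Cancelling the common factor $x$ then gives the claimed formula.

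There is essentially no serious obstacle: the only subtlety is justifying the cancellation of $x$ on both sides, and this is immediate because the constant term on the right of the displayed identity vanishes (the $n$ from $n\,C(G,x)$ is exactly cancelled by the $n$ coming from $\sum_v C(G-v,x)$). This reflects the underlying combinatorial content, namely that each $k$-clique $Q$ of $G$ is counted $k$ times in $\sum_v c_{k-1}(G[N_G(v)])$ (once for each vertex $v\in V(Q)$), which matches the coefficient $k\,c_{k}(G)$ of $x^{k-1}$ in $\frac{d}{dx}C(G,x)$. A purely combinatorial proof via this double-counting observation would work equally well, but the recurrence-based argument keeps the exposition parallel to the subsequent treatment of the second derivative via (\ref{edge-rec2}) and Lemma \ref{edgekey2}.
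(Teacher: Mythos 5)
Your argument is correct and uses exactly the same two ingredients as the paper's proof, namely the vertex-recurrence (\ref{vert-rec1}) and the vertex-deck identity of Lemma \ref{vertkey1}, differing only in the order in which they are invoked (you sum the recurrence first and then substitute the lemma, while the paper converts the lemma into a generating-function identity first and then substitutes the recurrence). The algebra and the cancellation of the factor $x$ are handled correctly, so this is essentially the paper's proof.
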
 

\begin{proof}
	
	By multiplying both sides of formula (\ref{vert-rec1}) by 
	$x^{i}$ and then summing over all $i~~(i \geq 0)$, we get
	\begin{equation}\label{equt1}
	\sum_{i \geq 0}(n - i) c_{i}(G)x^{i}
	= \sum_{i \geq 0}\sum_{v \in V(G)}c_{i}(G - v)x^{i},\nonumber
	\end{equation}
	or equivalently, by interchanging the summation order, we have  
	\begin{equation}\label{equ2}
	n\sum_{i \geq 0}c_{i}(G)x^{i} - x \sum_{i \geq 1}ic_{i}(G)x^{i-1} 
	= \sum_{v \in V(G)}\left( \sum_{i \geq 0}c_{i}(G - v)x^{i} \right). 
	\end{equation}
	On the other hand, by the definition of a clique polynomial of a 
	graph, it's first derivative is equal to
	\begin{equation}\label{eq-derivat}
	\frac{d}{dx}C(G,x) = \sum_{i \geq 1}ic_{i}(G)x^{i-1} 
	. 
	\end{equation}
	Hence, considering relation (\ref{eq-derivat}) and the definition of a clique polynomial, we can rewrite
	equation (\ref{equt1}) as follows
	\begin{equation}
	n . C(G,x) - x \frac{d}{dx}C(G,x)   
	= \sum_{v \in V(G)}C(G - v, x),\nonumber
	\end{equation}
	or equivalently,
	\begin{equation}\label{equ4}
	\frac{d}{dx}C(G,x)   
	= \sum_{v \in V(G)}\frac{C(G , x) - C(G - v, x)}{x}.
	\end{equation}
	Thus, considering relations (\ref{equ4}) and (\ref{vert-rec1})
	we finally get the desired result.

\end{proof}

Next, we give the graph-theoretical interpretation of 
the second derivative of the clique polynomials 
which is a new combinatorial formula to the best of our 
knowledge. 

\begin{thm}
	For any simple graph 
	$G = (V, E)$, we have
	\begin{equation}
	\frac{1}{2!}\frac{d^{2}}{dx^{2}}C(G,x) = \sum_{e \in E(G)}C(G[N(e)], x ).  
	\end{equation}
\end{thm}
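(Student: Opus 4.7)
The plan is to mirror the proof of the first-derivative theorem, but now using the edge-recurrence (\ref{edge-rec2}) in place of the vertex-recurrence (\ref{vert-rec1}), and the edge-deck identity in Lemma \ref{edgekey2} in place of the vertex-deck identity in Lemma \ref{vertkey1}. The driving observation is that the coefficient $\binom{k}{2}$ appearing in Lemma \ref{edgekey2} is precisely the coefficient that turns the generating function $\sum_k c_k(G)\,x^k$ into the second derivative, since $\sum_{k\ge 2}\binom{k}{2}c_k(G)\,x^k = \frac{x^2}{2!}\frac{d^2}{dx^2}C(G,x)$.

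Concretely, I would first multiply both sides of the identity in Lemma \ref{edgekey2} by $x^k$ and sum over $k\ge 2$. Before doing so I would check that the identity also holds trivially for $k=0$ and $k=1$ (where $\binom{k}{2}=0$ and both sides reduce to $m$ and $mn$ respectively), so that summation over all $k\ge 0$ produces clean generating functions on each side without boundary corrections. Interchanging the sum with the one over edges on the right-hand side then yields
\begin{equation}
m\,C(G,x) \;-\; \frac{x^2}{2!}\frac{d^2}{dx^2}C(G,x) \;=\; \sum_{e\in E(G)} C(G-e,x).
\end{equation}

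Next I would rewrite this as
\begin{equation}
\frac{x^2}{2!}\frac{d^2}{dx^2}C(G,x) \;=\; \sum_{e\in E(G)}\bigl(C(G,x)-C(G-e,x)\bigr),
\end{equation}
using $m=|E(G)|$ to redistribute $m\,C(G,x)$ over the edges. The edge-recurrence (\ref{edge-rec2}) now identifies each summand: $C(G,x)-C(G-e,x) = x^2\,C(G[N_G(e)],x)$. Substituting and cancelling the common factor $x^2$ gives the claimed formula.

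The main obstacle here is conceptual rather than computational: one has to recognise that $\binom{k}{2}$ is the right combinatorial weight to target the second derivative (a point which is natural once one views the first-derivative proof as using $k = \binom{k}{1}$). After that observation, the rest is bookkeeping, and the small-$k$ cases where Lemma \ref{edgekey2} might look vacuous need only a brief verification to justify extending the summation range down to $k=0$.
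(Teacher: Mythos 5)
Your proposal is correct and follows essentially the same route as the paper: multiply the edge-deck identity of Lemma \ref{edgekey2} by $x^k$, sum to obtain $m\,C(G,x)-\tfrac{x^2}{2!}\tfrac{d^2}{dx^2}C(G,x)=\sum_{e}C(G-e,x)$, and then invoke the edge-recurrence (\ref{edge-rec2}) to identify $C(G,x)-C(G-e,x)$ with $x^2C(G[N_G(e)],x)$. Your explicit verification of the $k=0,1$ boundary cases is a small point the paper glosses over, but the argument is the same.
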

\begin{proof}
	We first multiplying both sides of formula (\ref{edge-rec2}) by 
	$x^{i}$ and then summing over all $i~~(i \geq 0)$, we get
	\begin{equation}
	\sum_{i \geq 0}\left( m - {i \choose 2} \right)  c_{i}(G)x^{i}
	= \sum_{i \geq 0}\sum_{e \in E(G)}c_{i}(G - e)x^{i},\nonumber
	\end{equation}
	or equivalently, by interchanging the summation order, we obtain  
	\begin{equation}\label{eequ3}
	m\sum_{i \geq 0}c_{i}(G)x^{i} - x^{2} \sum_{i \geq 2}{i \choose 2}c_{i}(G)x^{i-2} 
	= \sum_{e \in E(G)}\left( \sum_{i \geq 0}c_{i}(G - e)x^{i} \right). 
	\end{equation}
	Next, by the definition of a clique polynomial of a 
	graph, it's second derivative is equal to
	\begin{equation}\label{eequ4}
	\frac{1}{2!}\frac{d^2}{dx^{2}}C(G,x) = \sum_{i \geq 2}{i \choose 2}c_{i}(G)x^{i-2} 
	. 
	\end{equation}
	
	Therefore, considering relation (\ref{eequ4}) and the definition of a clique polynomial, we can rewrite
	equation (\ref{eequ3}) as follows
	\begin{equation}
	m . C(G,x) - x^{2}\left( \frac{1}{2!} \frac{d^2}{dx^{2}}C(G,x)   \right) 
	= \sum_{e \in E(G)}C(G - e, x)\nonumber. 
	\end{equation}
	or equivalently,
	\begin{equation}\label{eequ5}
	\frac{1}{2!}\frac{d^2}{dx^{2}}C(G,x)   
	= \sum_{e \in E(G)}\frac{C(G , x) - C(G - e, x)}{x^2}.
	\end{equation}
	Finally, the relations (\ref{eequ5}) and (\ref{edge-rec2})
	imply the desired result.   
\end{proof}

\section{Triangle-deck Identity and Third Derivative}

In this section, we are going to develop
a graph-theoretical interpretation 
of the third derivative of a 
clique polynomial. 
\\
The following key lemma from 
\cite{HoedeLi,Ibrahim} is essential in our 
future arguments. 
We recall that by $G-M$, where $M$ is the set of edges 
of $G$, we mean a graph obtained from $G$ by deleting 
only edges of $M$ from $G$. 

\begin{lem}
	Let $G=(V,E)$	
	be a simple graph and the set of edges $M$ induces 
	a clique in $G$, then we have 
	
	\begin{equation}
	C(G,x) = C(G-M,x) + \sum^{\vert M \vert}_{r=2}
	(-1)^{r}(r-1) x^{r} 
	\sum_{S \subseteq M~\vert S \vert={r \choose 2}}
	C(G[N_{G}(S)], x). 
	\end{equation}

\end{lem}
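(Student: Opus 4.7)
The plan is to prove the identity by inclusion-exclusion on how a clique of $G$ intersects the vertex set underlying $M$. Let $T\subseteq V(G)$ be the set of vertices incident to edges of $M$, so that $G[T]$ is a complete graph on $t:=|T|$ vertices with $|M|=\binom{t}{2}$.

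First, I would note that a clique $Q$ of $G$ remains a clique in $G-M$ if and only if $Q$ contains no edge of $M$, equivalently $|V(Q)\cap T|\le 1$. Hence $C(G,x)-C(G-M,x)$ is the generating function of cliques $Q$ of $G$ with $|V(Q)\cap T|\ge 2$, weighted by $x^{|V(Q)|}$. For each $T'\subseteq T$, let $f(T')$ be the generating function of cliques $Q$ satisfying $V(Q)\cap T=T'$, and let $g(T')$ be the generating function of cliques $Q$ satisfying $T'\subseteq V(Q)$. Since $T'$ is itself a clique, every clique containing $T'$ decomposes as $T'$ joined to a clique in the common neighborhood, giving
$$g(T')=x^{|T'|}\,C(G[N_G(T')],x).$$

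Next, classifying each clique containing $T'$ by its exact intersection with $T$ yields $g(T')=\sum_{T'\subseteq T''\subseteq T}f(T'')$, and M\"obius inversion on the Boolean lattice of subsets of $T$ gives $f(T')=\sum_{T'\subseteq T''\subseteq T}(-1)^{|T''|-|T'|}g(T'')$. Summing over all $T'\subseteq T$ with $|T'|\ge 2$ and swapping the order of summation, the coefficient of $g(T'')$ with $|T''|=r\ge 2$ collapses to
$$\sum_{k=2}^{r}\binom{r}{k}(-1)^{r-k}=(-1)^{r}(r-1),$$
which follows from $(1-1)^{r}=0$ after subtracting the $k=0$ and $k=1$ terms. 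Substituting $g(T'')=x^{r}C(G[N_G(T'')],x)$ produces
$$C(G,x)-C(G-M,x)=\sum_{r=2}^{t}(-1)^{r}(r-1)\,x^{r}\sum_{\substack{T''\subseteq T\\ |T''|=r}}C(G[N_G(T'')],x).$$

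Finally, I would rewrite the inner sum in terms of subsets of $M$: each $r$-subset $T''\subseteq T$ corresponds bijectively to the edge set $S=E(G[T''])\subseteq M$ of size $\binom{r}{2}$, and $N_G(T'')=N_G(S)$ since both describe the common neighborhood of the same vertex set. This yields the stated formula. The main obstacle I anticipate is the combinatorial bookkeeping in the M\"obius inversion step, in particular isolating the compact coefficient $(r-1)(-1)^{r}$; a secondary issue is making the correspondence between $r$-vertex subsets of $T$ and $\binom{r}{2}$-edge subsets of $M$ precise, since the indexing $S\subseteq M$ with $|S|=\binom{r}{2}$ is implicitly restricted to edge sets of $r$-subcliques of $M$.
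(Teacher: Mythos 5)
Your argument is correct, and there is in fact nothing in the paper to compare it against: the lemma is quoted as a known result from Hoede--Li and Ibrahim and no proof is given, so your M\"obius-inversion derivation is a genuine, self-contained addition. The key steps all check out: the cliques of $G-M$ are exactly the cliques $Q$ of $G$ with $|V(Q)\cap T|\le 1$ (this uses that $M$ is the \emph{full} edge set of the complete graph $G[T]$); the factorization $g(T')=x^{|T'|}\,C(G[N_G(T')],x)$ is the same join decomposition that underlies the paper's recurrences (\ref{vert-rec1}) and (\ref{edge-rec2}); and the coefficient collapse $\sum_{k=2}^{r}\binom{r}{k}(-1)^{r-k}=(-1)^{r}(r-1)$ is right. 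The discrepancy between your upper limit $t$ and the stated upper limit $|M|=\binom{t}{2}$ is harmless, since for $r>t$ there are no $\binom{r}{2}$-edge subcliques of $M$ and those terms vanish. The point you flag at the end --- that the inner index set must be read as the edge sets of $r$-vertex subcliques of $G[T]$ rather than arbitrary $\binom{r}{2}$-element subsets of $M$ --- is not mere pedantry but genuinely necessary for the statement to be true: already for $G=K_4$, $M=E(K_4)$, $r=3$, the literal reading would sum over all $20$ three-edge subsets of $M$ instead of the $4$ triangles, and the identity would fail, whereas under your reading both sides equal $6x^2+4x^3+x^4$. So your proof both establishes the lemma and pins down the interpretation under which the stated formula holds.
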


We are mainly interested the particular case 
where $M$ induces a triangle $\delta$. 

\begin{thm}
	For any simple graph $G=(V,E)$
	and a triangle 
	$\delta=\{e_{1},e_{2},e_{3}\} \in \Delta_{3}(G)$, we have 
	
	\begin{equation}\label{TriIdentrkey}
	C(G,x) = C(G-\delta,x) + I_{2}(G,x) x^{2} - 
	2 I_{3}(G,x) x^{3},
	\end{equation} 	
	in which 
	\begin{equation}
	I_{2}(G,x) = \sum^{3}_{i=1}C(G[N_{G}(e_{i})]),
	\hspace{0.4cm}
	I_{3}(G,x) = C(G[N_{G}(\delta)],x). 
	\end{equation}
	
\end{thm}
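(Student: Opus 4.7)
The theorem is a direct specialization of the preceding key lemma, so the plan is essentially to substitute $M = E(\delta) = \{e_1,e_2,e_3\}$ into the general formula and verify that each term collapses to the claimed shape. Since $|M|=3$, the sum over $r$ runs only over $r=2$ and $r=3$, so there are exactly two nontrivial contributions to identify.

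First, I would handle the $r=2$ contribution. The binomial coefficient gives $\binom{2}{2}=1$, so the inner sum ranges over all size-$1$ subsets of $M$, i.e.\ over the individual edges $e_1,e_2,e_3$. The scalar factor is $(-1)^2(2-1)=+1$, so this contributes
\begin{equation}
x^{2} \sum_{i=1}^{3} C(G[N_{G}(e_{i})], x) \;=\; I_{2}(G,x)\, x^{2},
\end{equation}
which matches the middle term of \eqref{TriIdentrkey}.

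Next, I would handle the $r=3$ contribution. Here $\binom{3}{2}=3$, so the only admissible $S\subseteq M$ with $|S|=3$ is $S=M$ itself. The scalar factor is $(-1)^{3}(3-1)=-2$. To finish identifying this term with $-2 x^{3} I_{3}(G,x)$, I need to check that $N_{G}(S) = N_{G}(\delta)$ when $S$ is the full edge set of the triangle. Recalling that $N_{G}(e)$ was defined as the subgraph induced by the common neighbors of the endpoints of $e$, taking the intersection over all three edges of $\delta$ just produces the subgraph induced by the vertices adjacent to every vertex of $\delta$, which is exactly $N_{G}(\delta)$. Assembling the $r=2$ and $r=3$ contributions with $C(G-M,x)=C(G-\delta,x)$ yields \eqref{TriIdentrkey}.

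There is no real obstacle here; the whole argument is a bookkeeping verification. If anything, the subtlest step is the identification $N_G(S)=N_G(\delta)$ for $S=M$, which depends on interpreting $N_G(S)$ consistently with the single-edge definition given in Section~2. Once that is made explicit, the theorem follows immediately from the lemma of Hoede--Li and Ibrahim, and no further graph-theoretic argument is required.
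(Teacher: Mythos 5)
Your proposal is correct and is exactly the route the paper intends: the theorem is the specialization of the Hoede--Li/Ibrahim lemma to $|M|=3$, with the $r=2$ term giving $+x^{2}\sum_{i}C(G[N_G(e_i)],x)$ and the $r=3$ term giving $-2x^{3}C(G[N_G(\delta)],x)$. The paper states this as an immediate consequence without writing out the substitution, so your bookkeeping (including the identification $N_G(M)=N_G(\delta)$) is precisely the omitted verification.
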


One of the interesting question that naturally 
arise is that of \emph{triangle-recurrence}
for clique polynomials. In other words, we 
are searching for the class of graphs 
for which the following triangle-recurrence 
relation holds. 

\begin{equation}
C(G,x) = C(G-\delta,x) + x^{3} C(G[N_{G}(\delta)],x).
\end{equation}

Note that the above condition is \emph{equivalent}
to the following clique-counting 
identity 

\begin{equation}
\sum^{3}_{i=1}C(G[N_{G}(e_{i})]) = 3x C(G[N_{G}(\delta)],x)
\end{equation}

In the case of having \emph{symmetric property} for the 
\emph{edge-neighborhoods}; that is 
$G[N_{G}(e_{i})]$ ($i=1,2,3$) are the same, 
the above condition reduced to the following 
simple one.
\begin{equation}
C(G[N_{G}(e)]) = x C(G[N_{G}(\delta)],x). 
\end{equation}

Next, we prove the following theorem. 

\begin{thm}
	Let $G=(V,E)$ be a connected $K_{5}$-free 
	graph. Then, we have
	
	\begin{equation}
	\frac{1}{3!}\frac{d^{3}}{dx^{3}} C(G,x) =
	\sum_{\delta \in \Delta_{3}(G)}
	C(G[N_{G}(\delta)],x). 
	\end{equation}

\end{thm}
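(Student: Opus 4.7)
The plan is to exploit the $K_5$-free hypothesis to truncate both sides of the identity to polynomials of degree at most one in $x$, and then match their coefficients by invoking Lemma~2.1.

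First I would observe that a $K_5$-free graph satisfies $c_k(G)=0$ for every $k \geq 5$, so
\[
C(G,x) = 1 + c_1(G)x + c_2(G)x^2 + c_3(G)x^3 + c_4(G)x^4.
\]
Differentiating three times and dividing by $3!$ gives
\[
\frac{1}{3!}\,\frac{d^3}{dx^3}C(G,x) = c_3(G) + 4c_4(G)\,x,
\]
a polynomial of degree at most one in $x$.

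Second, I would argue that for every triangle $\delta \in \Delta_3(G)$ the induced subgraph $G[N_G(\delta)]$ is edgeless. Indeed, if $u,v \in N_G(\delta)$ were joined by an edge, then $V(\delta) \cup \{u,v\}$, together with the three edges of $\delta$, the six edges joining $\{u,v\}$ to $V(\delta)$, and the edge $uv$, would induce a $K_5$, contradicting the hypothesis. Hence $C(G[N_G(\delta)],x) = 1 + \mathrm{val}_G(\delta)\,x$, and summing over all triangles yields
\[
\sum_{\delta \in \Delta_3(G)} C(G[N_G(\delta)],x) = c_3(G) + x\sum_{\delta \in \Delta_3(G)} \mathrm{val}_G(\delta).
\]
Matching the two expressions is now immediate: the constant terms agree, and equality of the linear coefficients, $4c_4(G) = \sum_{\delta \in \Delta_3(G)} \mathrm{val}_G(\delta)$, is precisely Lemma~2.1 with $k=3$.

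I do not anticipate a real obstacle here: once $K_5$-freeness is in force, both sides collapse to degree-one polynomials and Lemma~2.1 supplies the only nontrivial coefficient identity. The genuinely hard problem, which I suspect is the true motivation for this section, is to remove the $K_5$-free restriction by absorbing the correction terms $I_2(G,x)$ and $I_3(G,x)$ from the triangle identity~(\ref{TriIdentrkey}) into a triangle-deck clique-counting identity analogous to Lemmas~\ref{vertkey1} and~\ref{edgekey2}. Connectedness, which plays no role in the argument sketched above, would presumably matter there.
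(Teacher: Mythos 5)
Your proof is correct, and it reaches the goal by a genuinely shorter route than the paper. Both arguments ultimately reduce to the same two facts --- that $C(G[N_{G}(\delta)],x)=1+val_{G}(\delta)\,x$ for every triangle $\delta$, and that $\sum_{\delta\in\Delta_{3}(G)}val_{G}(\delta)=4c_{4}(G)$ by the clique handshaking lemma with $k=3$ --- but you obtain the first fact by the direct structural observation that $K_{5}$-freeness forces the common neighborhood $N_{G}(\delta)$ of a triangle to be an independent set, so its clique polynomial is trivially $1+\vert N_{G}(\delta)\vert x$. The paper instead derives the same identity the long way around: it invokes the triangle-deletion identity (\ref{TriIdentrkey}), solves for $C(G[N_{G}(\delta)],x)$ in terms of $Diff(G,x)=C(G,x)-C(G-\delta,x)$ and $I_{2}(G,x)$, and then grinds through explicit formulas for the coefficients $c_{i}(G-\delta)$ to watch everything cancel down to $1+val_{G}(\delta)x$. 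Your argument buys brevity and transparency (and correctly notes that connectedness is never used --- it is also unused in the paper's proof); what the paper's machinery buys is a framework ($I_{2}$, $I_{3}$, the $M$-deletion lemma) intended to survive beyond the $K_{5}$-free case, where $N_{G}(\delta)$ is no longer edgeless and your shortcut is unavailable --- exactly the generalization you identify at the end.
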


\begin{proof}
	We first define 
	\begin{equation}
	Diff(G,x) =
	C(G,x) - C(G-\delta,x). 
	\end{equation}
	Now, considering the identity (\ref{TriIdentrkey}),
	we equivalently have
	
	\begin{equation}
	C(G[N_{G}(\delta)],x) = 
	\frac{I_{2}(G,x) x^{2} - Diff(G,x)}
	{2 x^{{3}}}. 
	\end{equation}
	Our ultimate goal is to prove that 
	\begin{equation}
	\sum_{\delta \in \Delta_{3}(G)} C(G[N_{G}(\delta)],x)
	= \frac{1}{3!}\frac{d^{3}}{dx^{3}} C(G,x). 
	\end{equation}
	
	Next, we note that since $G$ is a connected 
	$K_{5}$-free graph, then we conclude that 
	\begin{equation}
	C(G,x) = 
	1 + c_{1}(G) x + c_{2}(G) x^{2}
	+ c_{3}(G) x^{3} + c_{4}(G) x^{4}. 
	\end{equation}
	Based on the proof of $(1)$, one can also show that 
	\begin{equation}
	C(G-\delta,x) = 
	1 + c_{1}(G-\delta) x + c_{2}(G-\delta) x^{2}
	+ c_{3}(G-\delta) x^{3} + c_{4}(G-\delta) x^{4}, 
	\end{equation}
	in which 
	\begin{eqnarray}
	c_{1}(G-\delta) & = & c_{1}(G), \nonumber\\
	c_{2}(G-\delta) & = & c_{2}(G)-3, \nonumber\\
	c_{3}(G-\delta) & = & c_{3}(G)-
	\sum_{i=1}^{3}val_{G}(e_{i})+2, \nonumber\\
	c_{4}(G-\delta) & = & c_{4}(G)-
	\sum_{i=1}^{3}c_{2}(G[N_{G}(e_{i})])+
	2val_{G}(\Delta).
	\end{eqnarray} 	
	
	By considering above formulas, we obtain 
	\begin{eqnarray}
	Diff(G,x) &=& 3x^{2} + \big(c_{3}(G)-
	\sum_{i=1}^{3}val_{G}(e_{i})-2\big) x^{3}\nonumber\\
	& + & \big(
	\sum_{i=1}^{3}c_{2}(G[N_{G}(e_{i})])-
	2val_{G}(\Delta)
	\big) x^{4},
	\end{eqnarray} 
	and 
	\begin{eqnarray}
	N_{2}(G,x) & = & 
	\big(
	1 + val_{G}(e_{1}) x + c_{2}(G[N_{G}(e_{1})]) x^{2}
	\big) \nonumber\\
	& + & 
	\big(
	1 + val_{G}(e_{2}) x + c_{2}(G[N_{G}(e_{2})]) x^{2}
	\big) \nonumber\\
	& + & 
	\big(
	1 + val_{G}(e_{3}) x + c_{2}(G[N_{G}(e_{3})]) x^{2}
	\big). \nonumber
	\end{eqnarray}
	Finally, from $(2)$ and $(3)$, we get 
	\begin{equation}
	C(G[N_{G}(\Delta)],x) = 
	\frac{2 x^{3} + 2 val_{G}(\Delta) x^{4}}
	{2 x^{3}} = 1 + val_{G}(\Delta) x. 
	\end{equation}
	Thus, considering $(4)$ and clique handshaking lemma for
	$\omega(G)=4$, we get 
	
	\begin{eqnarray}
	\sum_{\delta \in \Delta_{3}(G)} 
	C(G[N_{G}(\Delta)],x) & = & 
	\vert \Delta_{3}(G) \vert + 4c_{4}(G) x \nonumber\\
	& = & 
	c_{3}(G) + 4c_{4}(G) \nonumber\\
	& = & \frac{1}{3!} \frac{d^{3}}{dx^{3}} C(G,x), 
	\end{eqnarray}
	which is the desired result.

\end{proof}

\section{Open Questions and Conjectures}

Here, we propose several interesting open questions 
and conjectures regarding subgraph-counting polynomials. 
The first open question is related to another similar 
clique-counting polynomial \cite{Li-Gutman}, as follows. 
\\
${\bf Open~question~1.}$
Let $c(G,x)$ be a clique-counting polynomial defined by 

\begin{equation}
c(G,x)= 1+ \sum_{k=1}^{n}c_{k}(G) x^{n-k}.
\end{equation}	
Can we find the similar formulas for
the first and second derivatives of $c(G,x)$? 	
\\
$\textbf{Conjecture~1.}$
Let $G=(V,E)$ be a simple graph. Then, we have 

\begin{equation}
\frac{d}{dx}c(G,x) = 
\sum_{v \in V(G)} c(G-v,x).
\end{equation}
Moreover, we also have 

\begin{equation}
\frac{1}{2!}\frac{d^{2}}{dx^{2}}c(G,x) = 
\sum_{e \in E(G)} c(G-e,x).
\end{equation}
\\
$\textbf{Open~question~2.}$

Do we have a \emph{triangle-version} of Lemma 
\ref{edgekey2}. In other words, is the following identity 
true in general?
\\	
Let $G=(V,E)$ be a graph on $m$ edges and $H$
be any graph with $k$ vertices without 
isolated vertices such that 
$
\vert V(H) \vert \leq \vert V(G) \vert 
$
,
$
\vert E(H) \vert \leq \vert E(G) \vert 
$
and 
$
\vert \Delta_{3}(H) \vert < \vert \Delta_{3}(G) \vert 
$
.
Then, we have 

\begin{equation}\label{trianglekey}
\Big(
t-{k \choose 3} 
\Big) c_{k}(G) =
\sum_{\delta  \in \Delta_{3}(G)}
c_{k}(G-\delta) \hspace{0.4cm}(k \geq 3),
\end{equation}

where $t$ denotes the number of \emph{triangles}
in $G$. 
\\
It is not worthy that when you delete a vertex 
or an edge it has no effects in other cliques.

\begin{defn}
	
	For a given graph $G=(V,E)$, it's \emph{triangle graph}
	denoted by $T(G)$ is the graph whose vertex set is the 
	set of triangles of $G$ and two vertices of $T(G)$
	are connected if their corresponding triangles share 
	an edge. 
	
\end{defn}

$\textbf{Conjecture~2.}$

For a given graph $G$ such that it's triangle graph 
$T(G)$ is an empty graph, then the 
\emph{triangle-version} of Lemma \ref{edgekey2} holds 
for $G$. 
\\
As a possible extension of the derivatives of the clique polynomial, we
believe that the following conjecture is also true. 
\\
$\textbf{Conjecture~3.}$

We have the following formula for the \emph{third derivative} of 
the clique polynomial:

\begin{equation}
\frac{1}{3!}\frac{d^{3}}{dx^{3}}C(G,x) = 
\sum_{\delta \in \Delta_{3}(G)} C(G-\delta,x),
\end{equation}

%bbbbbbbbbbbbbbbbbbbbbbbbbbbbbbbbbbbbbbbbbbbbbbbbbbbbbbbbbb
%===========================bibl

%===========================================

\end{document}